\documentclass[12pt]{amsart}

\usepackage{latexsym, amsmath, amssymb, amsfonts, amscd, amsthm, mathrsfs}
\usepackage[all, cmtip]{xy}

\newcommand{\Cb}{{\mathbb C}}
\newcommand{\Rb}{{\mathbb R}}

\newcommand{\Tb}{{\mathbb T}}

\newcommand{\TM}{{\rm TM}}
\newcommand{\rM}{{\rm M}}

\newcommand{\pa}{\|}

\newtheorem{theorem}{Theorem}[section]
\newtheorem{corollary}[theorem]{Corollary}

\newtheorem{proposition}[theorem]{Proposition}

\theoremstyle{definition}

\newtheorem{example}[theorem]{Example}

\begin{document}

\title{Smooth Approximation of Lipschitz Projections}
\author{Hanfeng Li}
\thanks{Partially supported by NSF Grant DMS-0701414.}
\address{Department of Mathematics \\
SUNY at Buffalo \\
Buffalo, NY 14260-2900, U.S.A.} \email{hfli@math.buffalo.edu}
\date{October 14, 2009}


\begin{abstract}
We show that any Lipschitz projection-valued function $p$ on a connected closed Riemannian manifold
can be approximated uniformly by smooth projection-valued functions $q$ with Lipschitz constant close to that of $p$.
This answers a question of Rieffel.
\end{abstract}

\maketitle


\section{Introduction} \label{introduction:sec}

The question of approximating continuous functions on Riemannian manifolds by
smooth functions preserving geometric properties has a long history. In \cite{GW72, GW74, GW79}
Greene and Wu studied such questions for real-valued functions and geometric properties
such as having Lipschitz constant bounded above by a fixed number, and used such results to obtain geometric applications
\cite{GW74}.

In a previous version of \cite{Rieffel06} Rieffel asked the question whether, for any Lipschitz function $p$ on a compact
Riemannian manifold $\rM$ with values in the projection set of a matrix algebra $M_n(\Cb)$ and any $\varepsilon>0$, there is
a smooth function $q$ on $\rM$ also with values in the projection set of $M_n(\Cb)$ with $\pa p-q\pa_{\infty}<\varepsilon$
and $L(q)<L(p)+\varepsilon$. Here $\pa f\pa_{\infty}$ denotes the supremum norm of $f$ and
\begin{eqnarray}  \label{L:eq}
L(f)=\sup_{x, y\in \rM, \, x\neq y}\frac{\pa f(x)-f(y)\pa}{\rho(x, y)}
\end{eqnarray}
denotes the Lipschitz constant of $f$, for $\rho$ denoting the geodesic distance on $\rM$. An affirmative answer to this question
has direct application on obtaining lower bounds of the Lipschitz constants for projection-valued functions on $\rM$
representing a fixed vector bundle on $\rM$.

In this note we answer Rieffel's question affirmatively. In fact, we shall deal more generally with
functions with values in the projection set of any $C^*$-algebra.
Following \cite{Rieffel06}, by a {\it real $C^*$-subring} we mean a
norm-closed $*$-subring of a $C^*$-algebra which is closed under
multiplication by scalars in $\Rb$.
For example, $M_n(\Rb)$ is a real $C^*$-subring contained in $M_n(\Cb)$.
For
a compact manifold $\rM$ and a real $C^*$-subring $A$, denote by $C(\rM, A)$ the real $C^*$-subring of all
continuous $A$-valued functions on $\rM$, and denote by $C^{\infty}(\rM, A)$ the subalgebra of all
smooth $A$-valued functions on $\rM$.

\begin{theorem} \label{approximation:thm}
Let $\rM$ be a connected closed Riemannian manifold. Let $A$ be a real $C^*$-subring.
For any projection $p\in C(\rM, A)$ and any $\varepsilon>0$, there exists a
projection $q\in C^{\infty}(\rM, A)$ with $\pa p-q\pa_{\infty}<\varepsilon$
and $L(q)<L(p)+\varepsilon$.
\end{theorem}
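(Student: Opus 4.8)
The plan is to smooth $p$ in two stages: first mollify it to a \emph{smooth} self-adjoint $A$-valued function $p_t$ whose pointwise spectra have been squeezed into tiny neighbourhoods of $\{0,1\}$, then push $p_t$ back onto the projections by functional calculus. We may assume $L(p)>0$, since otherwise $p$ is constant and $q=p$ already works. For the first stage, use that the compact manifold $M$ has Ricci curvature bounded below and set $p_t=e^{t\Delta}p$, the heat semigroup applied to $p$ (equivalently $p_t(x)=\int_M h_t(x,y)\,p(y)\,dy$ against the heat kernel, a probability density in $y$). Then $p_t(x)$ is an $A$-valued average of values of $p$, so $p_t\in C^{\infty}(M,A)$ and $p_t=p_t^*$; moreover $\pa p_t-p\pa_{\infty}=O\!\big(L(p)\sqrt{t}\,\big)\to 0$, while the standard gradient estimate for the heat flow gives $L(p_t)\le e^{Kt}L(p)=(1+O(t))L(p)$ for a constant $K$ depending only on $M$. (Any Greene--Wu-type smoothing with Lipschitz control, such as those cited above, works equally well.) Since $\spec(p(x))\subseteq\{0,1\}$, once $\pa p_t-p\pa_{\infty}<\eta$ we have $\spec(p_t(x))\subseteq(-\eta,\eta)\cup(1-\eta,1+\eta)$ for every $x$.

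For the second stage, fix once and for all a smooth $\chi\colon\Rb\to\Rb$ with $\chi\equiv 0$ on $(-\infty,2/5]$ and $\chi\equiv 1$ on $[3/5,\infty)$, and set $q=\chi(p_t)$. Since $\chi$ is real and $\chi^2=\chi$ on each set $(-\eta,\eta)\cup(1-\eta,1+\eta)\supseteq\spec(p_t(x))$, each $q(x)$ is a projection; since $\chi$ vanishes near $0$, polynomial approximation keeps $\chi(p_t(x))$ inside $A$; and since on the relevant spectral set $\chi(p_t)=\frac{1}{2\pi i}\oint_{\gamma}(z-p_t)^{-1}\,dz$ is a spectral projection depending holomorphically on $p_t$, we get $q\in C^{\infty}(M,A)$. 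Finally $\chi$ is the identity on $\{0,1\}$, so $\chi(p(x))=p(x)$; hence every remaining estimate for $q$ reduces to the operator-Lipschitz behaviour of $\chi$ for self-adjoint elements whose spectrum lies near $\{0,1\}$.

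The heart of the argument is the following sharp local estimate: if $c=c^*$ lies in a $C^*$-algebra and $\spec(c)$ is within distance $r<2/5$ of $\{0,1\}$, then $\chi$ is locally constant on $\spec(c)$, $\chi(c)=E_1(c)$ is the spectral projection onto the cluster near $1$, and the Fr\'echet derivative $D\chi(c)\colon h\mapsto D\chi(c)[h]$ has norm at most $1/(1-2r)$. Indeed, writing $E_0(c),E_1(c)$ for the spectral projections near $0$ and $1$ and $c_i=E_i(c)cE_i(c)$, a contour-integral computation shows that $D\chi(c)$ annihilates $E_0(c)hE_0(c)+E_1(c)hE_1(c)$, while on the block $E_0(c)hE_1(c)$ it is the double operator integral with symbol $1/(\mu-\lambda)$; the identity $\frac{1}{\mu-\lambda}=\int_0^{\infty}e^{-s(\mu-\lambda)}\,ds$ (valid since $\mu\ge 1-r>r\ge\lambda$ on the two clusters) evaluates this block to $\int_0^{\infty}e^{-sc_1}\big(E_0(c)hE_1(c)\big)e^{sc_0}\,ds$, of norm at most $\pa E_0(c)hE_1(c)\pa\int_0^{\infty}e^{-s(1-2r)}\,ds=\pa E_0(c)hE_1(c)\pa/(1-2r)$; the block $E_1(c)hE_0(c)$ is the adjoint of this one and has orthogonal left and right supports, so $\pa D\chi(c)[h]\pa$ equals the norm of a single block and is $\le\pa h\pa/(1-2r)$. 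Integrating this along the segment from $p(x)$ to $p_t(x)$, which stays within $\eta$ of $\{0,1\}$, gives $\pa q-p\pa_{\infty}\le\eta/(1-2\eta)<\varepsilon$ once $t$ is small.

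For the Lipschitz constant, fix $x\ne y$. If $\rho(x,y)<\rho_1$, where $\rho_1$ is chosen small enough that the segment from $p_t(x)$ to $p_t(y)$ has length $\le L(p_t)\rho_1$ and therefore stays within $r:=\eta+L(p_t)\rho_1$ of $\{0,1\}$, then integrating the estimate above along that segment yields $\pa q(x)-q(y)\pa\le\frac{1}{1-2r}\pa p_t(x)-p_t(y)\pa\le\frac{L(p_t)}{1-2r}\,\rho(x,y)$, and choosing $t$ and then $\rho_1$ small makes $\frac{L(p_t)}{1-2r}<L(p)+\varepsilon$. If instead $\rho(x,y)\ge\rho_1$, then directly $\pa q(x)-q(y)\pa\le 2\pa q-p\pa_{\infty}+\pa p(x)-p(y)\pa\le 2\pa q-p\pa_{\infty}+L(p)\rho(x,y)$, whence $\pa q(x)-q(y)\pa/\rho(x,y)\le L(p)+2\pa q-p\pa_{\infty}/\rho_1<L(p)+\varepsilon$ once $t$, and hence $\pa q-p\pa_{\infty}$, is small enough. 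Therefore $L(q)<L(p)+\varepsilon$. The one genuinely delicate point is the sharp estimate of the third paragraph: an off-the-shelf operator-Lipschitz bound, or the naive contour estimate of $\frac{1}{2\pi i}\oint_{\gamma}(z-c)^{-1}h(z-c)^{-1}\,dz$, only yields $L(q)\lesssim 2L(p)$, and one must exploit the special structure of $\chi$ near $\{0,1\}$ --- equivalently, that the limiting operation as the spectrum concentrates is the norm-one ``compression to the off-diagonal part'' --- to obtain the constant $1+o(1)$.
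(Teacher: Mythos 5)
Your proposal is correct in substance and follows the paper's skeleton---first smooth, then return to a projection by functional calculus, the whole point being that this return step dilates the Lipschitz data only by a factor $1/(1-2r)$---but your key lemma is genuinely different from the paper's. The paper proves an abstract seminorm estimate (Proposition~\ref{approx proj to projection:prop}): for any lower semicontinuous seminorm $L$ vanishing on $1$ and satisfying $L(b^{-1})\le\pa b^{-1}\pa^2L(b)$, one has $L(f(a))\le L(a)/(1-2\delta)$; it is proved by writing $f(a)$ as a contour integral over a rectangle whose sides are pushed to infinity, so that only the vertical line $\Rea z=1/2$ contributes, and it is applied \emph{once, globally}, to the single element $p_1\in C(M,B)$, whose spectrum has a uniform gap $(\delta,1-\delta)$. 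You instead prove a bound $1/(1-2r)$ on the Fr\'echet derivative of the Riesz projection at self-adjoint elements with spectrum $r$-close to $\{0,1\}$, via the block decomposition, the vanishing on diagonal blocks, and the Laplace-transform evaluation of the off-diagonal double operator integral, and then integrate along segments; since the segment joining $p_t(x)$ and $p_t(y)$ stays in the good spectral region only when $x,y$ are close, you need the near/far dichotomy in $\rho(x,y)$, which the paper's global argument in $C(M,B)$ avoids. What each buys: the paper's lemma applies verbatim to other seminorms satisfying the resolvent inequality (e.g.\ those coming from group actions, which is how the author sharpens several of Rieffel's estimates), whereas your derivative bound needs no such hypotheses, exploits only the difference-quotient form of $L$, and isolates a clean operator-Lipschitz fact of independent interest; both give the same sharp constant, exhibited as optimal by the paper's two-point example. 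Minor and fixable blemishes: the evaluated block should read $\int_0^\infty e^{sc_0}\bigl(E_0(c)hE_1(c)\bigr)e^{-sc_1}\,ds$ (your factors are misplaced, though the norm estimate is unaffected); the choice of $t$ and $\rho_1$ is circular as written (fix it by picking $\rho_1$ from the a priori bound $L(p_t)\le 2L(p)$ and only then shrinking $t$); and the vector-valued heat-flow estimate $L(e^{t\Delta}p)\le e^{Kt}L(p)$ deserves a word (a coupling/Kantorovich-duality argument reduces it to the scalar case), or one can simply use the Banach-space-valued Greene--Wu smoothing, as you allow and as the paper's Corollary~\ref{compact smoothing:cor} provides.
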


As a sample, Rieffel has obtained 
lower bound 
of the Lipschitz constants 
for  projections representing certain complex line bundle
on the two-torus $\Tb^2$ in \cite[Proposition 12.1]{Rieffel06}, via obtaining such 
bound for smooth projections first and then extend the bound to nonsmooth projections using
Theorem~\ref{approximation:thm}. 

The proof of Theorem~\ref{approximation:thm} has two ingredients. The first one is
that part of Greene and Wu's results hold generally for functions valued in any Banach space.
We recall these results in Section~\ref{approximation:sec}.
The second one is a fine estimate of the Lipschitz seminorm of the projection in a $C^*$-algebra
obtained in the usual way from an ``almost-projection''. We give this estimate and prove Theorem~\ref{approximation:thm} in Section~\ref{estimate:sec}.

I am grateful to Wei Wu for comments.

\section{Smooth Approximation of Continuous Functions} \label{approximation:sec}

In \cite{GW72} Greene and Wu introduced {\it the Riemannian convolution smoothing process}
for approximating continuous real-valued functions on a Riemannian manifold by smoothing functions
preserving geometric properties. They used this technique to obtain certain geometric application
\cite{GW74}. In fact, much of Greene and Wu's results work for functions valued in a Banach space.
Let us recall their construction on \cite[Page 646-647]{GW72}.

Fix a positive integer $n$. Let $\kappa:\Rb\rightarrow \Rb$ be a non-negative smooth function which has support contained in the interval
$[-1, 1]$, is constant in a neighborhood of $0$, and satisfies $\int_{v\in \Rb^n}\kappa(\pa v\pa)=1$, where $\Rb^n$ is equipped
with the standard Lebesgue measure.
Let $V$ be a real Banach space.
Let $\rM$ be a Riemannian manifold of dimension $n$ without boundary, and let $K$ be a compact subset of $\rM$.
Then there is an $\varepsilon_K>0$ such that, for every $x$ in some neighborhood of $K$ and every $v$ in the tangent space
$\TM_x$ of $\rM$ at $x$ with $\pa v\pa<\varepsilon_K$, the exponential $\exp_xv$ is defined.
For any continuous function $f: \rM\rightarrow V$ and any $0<\varepsilon<\varepsilon_K$ define
a $V$-valued function $f_{\varepsilon}$ on a neighborhood of $K$ by
\begin{eqnarray*}
f_{\varepsilon}(x)=\frac{1}{\varepsilon^n}\int_{v\in \TM_x}\kappa(\frac{\pa v\pa}{\varepsilon})f(\exp_xv),
\end{eqnarray*}
where $\TM_x$ is equipped with the standard Lebesgue measure via a linear isometry between $\TM_x$ and the Euclidean space $\Rb^n$.

In \cite[Page 647]{GW72} and \cite[Lemma 8]{GW74} Greene and Wu proved the following theorem for the case $V=\Rb$.
Their proof works for any Banach space  $V$.

\begin{theorem}[Greene-Wu] \label{smoothing:thm}
Let $f: \rM\rightarrow V$ be continuous. Let $K$ be a compact subset of $\rM$.
When $\varepsilon>0$ is small enough (depending only
on $\rM$ and $K$), $f_{\varepsilon}$ is
a smooth function on an open neighborhood of $K$.
The function $f_{\varepsilon}$ converges to $f$ uniformly on $K$ as $\varepsilon\to 0$.
If $f$ is smooth on a neighborhood of $K$, then $f_{\varepsilon}$
converges to $f$ in the smooth topology on $K$ as $\varepsilon\to 0$.
If $\rM$ is connected and $f$ has Lipschitz constant $D$ on a neighborhood $W$ of $K$,
i.e.,
$$ \sup_{x, y\in W,\, x\neq y}\frac{\pa f(x)-f(y)\pa}{\rho(x, y)}=D,$$
where $\rho$ denotes the geodesic distance on $\rM$, then for any $\delta>0$, when $\varepsilon$ is small enough
one has $\pa \partial_v(f_{\varepsilon})\pa<D+\delta$  for all $x\in K$ and unit vector $v\in \TM_x$.
\end{theorem}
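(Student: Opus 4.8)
The plan is to reduce all four assertions to a rescaled form of the smoothing operator. Substituting $v=\varepsilon w$ gives
\begin{eqnarray*}
f_{\varepsilon}(x)=\int_{w\in \TM_x}\kappa(\pa w\pa)f(\exp_x(\varepsilon w))\, dw,
\end{eqnarray*}
where the integrand is supported in the unit ball $\pa w\pa\le 1$ and, by the normalization $\int_{\Rb^n}\kappa(\pa v\pa)=1$, the operator reproduces constants: $\int_{\TM_x}\kappa(\pa w\pa)\, dw=1$. Since $\kappa$ is radial this integral is independent of the isometry $\TM_x\cong\Rb^n$ used. I would treat smoothness, uniform convergence, convergence in the smooth topology, and the Lipschitz estimate in turn.

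For smoothness the obstacle is that $f$ is only continuous, so one cannot differentiate under the integral in the form above. I would move the smoothness onto the kernel by the change of variables $y=\exp_x(\varepsilon w)$, valid as a diffeomorphism of balls once $\varepsilon$ is below the injectivity radius on $K$ (this is where $\varepsilon_K$ enters). This rewrites
\begin{eqnarray*}
f_{\varepsilon}(x)=\int_{\rM}K_{\varepsilon}(x,y)f(y)\, dy,\qquad K_{\varepsilon}(x,y)=\frac{1}{\varepsilon^n}\,\kappa\!\left(\frac{\pa \exp_x^{-1}(y)\pa}{\varepsilon}\right)\frac{1}{|\det D\exp_x(\exp_x^{-1}(y))|}.
\end{eqnarray*}
The kernel is smooth in $x$: the map $(x,y)\mapsto \exp_x^{-1}(y)$ and the Jacobian factor are smooth near the diagonal, the Jacobian is bounded away from $0$ for small $\varepsilon$, and $w\mapsto\kappa(\pa w\pa)$ is smooth because $\kappa$ is constant near $0$, skirting the singularity of the norm. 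As $x$ ranges over a small ball the relevant $y$ stay in a fixed compact set, so differentiation under the integral sign applies and $f_{\varepsilon}$ is smooth on a neighborhood of $K$.

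Uniform convergence follows from the reproducing property: $f_{\varepsilon}(x)-f(x)=\int_{\TM_x}\kappa(\pa w\pa)[f(\exp_x(\varepsilon w))-f(x)]\, dw$, and $\rho(\exp_x(\varepsilon w),x)\le \varepsilon$ for $\pa w\pa\le 1$, so uniform continuity of $f$ on a compact neighborhood of $K$ forces the right-hand side to $0$ uniformly. When $f$ is smooth, differentiation under the integral is legitimate already in the rescaled form; since $\exp_x(\varepsilon w)\to x$ while the basepoint derivative $\partial_x[\exp_x(\varepsilon w)]$ tends to the identity as $\varepsilon\to0$, each $\partial_v f_{\varepsilon}\to\partial_v f$ uniformly, and iterating this gives convergence in the smooth topology.

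The main obstacle is the Lipschitz estimate, since the naive bound $\pa f(x)-f(y)\pa\le D\rho(x,y)$ tested against $|\partial_x K_{\varepsilon}|$ only yields a constant multiple of $D$. Instead I would differentiate along a unit-speed geodesic $\gamma$ with $\gamma(0)=x$, $\gamma'(0)=v$, and use parallel transport $P_t\colon \TM_x\to \TM_{\gamma(t)}$ to write $f_{\varepsilon}(\gamma(t))=\int_{\TM_x}\kappa(\pa w\pa)f(\exp_{\gamma(t)}(\varepsilon P_tw))\, dw$, so that $f_{\varepsilon}(\gamma(t))-f_{\varepsilon}(x)$ becomes a single integral over $\TM_x$. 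The Lipschitz hypothesis bounds the integrand by $D\,\rho(\exp_{\gamma(t)}(\varepsilon P_tw),\exp_x(\varepsilon w))$; dividing by $|t|$, letting $t\to0$ (the difference quotient of the distance converging to the speed of the curve $t\mapsto\exp_{\gamma(t)}(\varepsilon P_tw)$), and passing the limit inside by dominated convergence gives
\begin{eqnarray*}
\pa\partial_v f_{\varepsilon}(x)\pa\le D\int_{\TM_x}\kappa(\pa w\pa)\,\pa J_w(\varepsilon)\pa\, dw,
\end{eqnarray*}
where $J_w$ is the variation field of $\Gamma(t,s)=\exp_{\gamma(t)}(sP_tw)$, a Jacobi field along $s\mapsto\exp_x(sw)$ with $J_w(0)=v$ and $J_w'(0)=0$ (the latter because $P_tw$ is parallel). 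A Taylor expansion of the Jacobi equation yields $J_w(\varepsilon)=v-\tfrac12 R(v,w)w\,\varepsilon^2+O(\varepsilon^3)$, where $R$ is the Riemann curvature tensor, hence $\pa J_w(\varepsilon)\pa=1+O(\varepsilon^2)$ with a constant controlled by the curvature bound on the compact set $K$ and by $\pa w\pa\le1$, uniformly in $x\in K$ and unit $v$. Since $\int\kappa(\pa w\pa)\, dw=1$, this gives $\pa\partial_v f_{\varepsilon}(x)\pa\le D(1+C\varepsilon^2)<D+\delta$ for $\varepsilon$ small, which is the assertion; the flat case $R=0$ recovers the familiar fact that mollification does not increase the Lipschitz constant.
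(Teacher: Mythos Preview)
The paper does not actually prove this theorem: immediately before the statement it cites Greene and Wu for the case $V=\Rb$ and simply remarks that their proof works unchanged for any Banach space $V$. Your proposal supplies the argument the paper omits, and it is correct. The rescaling, the kernel representation for smoothness (with the observation that $\kappa$ being constant near $0$ sidesteps the non-smoothness of $\|\cdot\|$ at the origin), and the uniform-convergence step are standard; your treatment of the Lipschitz bound---comparing $f_\varepsilon$ at nearby basepoints via parallel transport and controlling the resulting variation by the Jacobi field $J_w$ with $J_w(0)=v$, $J_w'(0)=0$, hence $\|J_w(\varepsilon)\|=1+O(\varepsilon^2)$ uniformly for $x\in K$, $\|w\|\le 1$, and unit $v$---is precisely the mechanism behind Lemma~8 of \cite{GW74}, and nothing in it uses that $V$ is one-dimensional. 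So you have reconstructed the proof the paper defers to the references.
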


One direct consequence of the above approximation theorem is

\begin{corollary} \label{compact smoothing:cor}
Let $\rM$ is a connected closed Riemannian manifold. For any continuous map $f:\rM\rightarrow V$
and any $\delta>0$, when $\varepsilon>0$ is small enough, $f_{\varepsilon}$ is a
smooth function on $\rM$ with $\pa f-f_{\varepsilon}\pa_{\infty} <\delta$ and $L(f_{\varepsilon})<L(f)+\delta$,
where $L(f_{\varepsilon})$ and $L(f)$ are defined by (\ref{L:eq}).
\end{corollary}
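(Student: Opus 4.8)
The plan is to deduce the corollary from the Greene--Wu smoothing theorem (Theorem~\ref{smoothing:thm}) by taking $K = \rM$ itself, which is legitimate since $\rM$ is closed, hence compact. First I would fix $\delta > 0$ and apply Theorem~\ref{smoothing:thm} with $K = \rM$: for all sufficiently small $\varepsilon$, the function $f_\varepsilon$ is smooth on an open neighborhood of $\rM$, i.e.\ on all of $\rM$, and $f_\varepsilon \to f$ uniformly on $\rM$, so $\pa f - f_\varepsilon\pa_\infty < \delta$ once $\varepsilon$ is small. The only nontrivial point is the Lipschitz estimate $L(f_\varepsilon) < L(f) + \delta$, because Theorem~\ref{smoothing:thm} does not directly bound the global Lipschitz constant $L(f_\varepsilon)$ of~(\ref{L:eq}); it only bounds the directional derivatives $\pa \partial_v(f_\varepsilon)\pa$ at each point by $D + \delta'$, where $D = L(f)$ is the Lipschitz constant of $f$ on the neighborhood $W = \rM$.

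The key step is therefore to pass from the pointwise bound on directional derivatives of a smooth map to a bound on its global Lipschitz constant. For this I would use the standard fact that, on a connected Riemannian manifold, if a $C^1$ map $g$ into a Banach space $V$ satisfies $\pa \partial_v g(x)\pa \le C$ for every $x$ and every unit tangent vector $v \in \TM_x$, then $\pa g(x) - g(y)\pa \le C\,\rho(x,y)$ for all $x, y$. The argument: given $x, y$, choose a (piecewise) smooth path $\gamma:[0,\ell] \to \rM$ parametrized by arclength from $x$ to $y$ with length $\ell$ close to $\rho(x,y)$; then $t \mapsto g(\gamma(t))$ is a $C^1$ path in $V$ with derivative $\partial_{\dot\gamma(t)} g(\gamma(t))$ of norm at most $C$, so $\pa g(x) - g(y)\pa = \pa \int_0^\ell \frac{d}{dt} g(\gamma(t))\, dt\pa \le C\ell$, and taking the infimum over such paths gives $\pa g(x)-g(y)\pa \le C\,\rho(x,y)$. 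Hence $L(g) \le C$.

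Combining the two steps: apply Theorem~\ref{smoothing:thm} with the choice $\delta' = \delta$, so that for $\varepsilon$ small enough one has $\pa \partial_v(f_\varepsilon)(x)\pa < L(f) + \delta$ for all $x \in \rM$ and all unit $v \in \TM_x$; by the path-integration argument this yields $L(f_\varepsilon) \le L(f) + \delta$, and shrinking $\delta$ slightly (or replacing $\delta$ by $\delta/2$ at the outset) upgrades this to the strict inequality $L(f_\varepsilon) < L(f) + \delta$. Taking $\varepsilon$ small enough to simultaneously satisfy the smoothness conclusion, the uniform estimate $\pa f - f_\varepsilon\pa_\infty < \delta$, and the Lipschitz estimate completes the proof. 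I expect the main (and only real) obstacle to be the bookkeeping in the path-integration lemma — in particular making sure one may work with rectifiable paths whose length approximates the geodesic distance, and that $f_\varepsilon$ is genuinely $C^1$ on an open neighborhood of $\rM$ so the chain rule applies — but this is routine Riemannian geometry.
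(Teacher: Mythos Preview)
Your proposal is correct and is exactly the argument implicit in the paper, which offers no proof and simply calls the corollary a ``direct consequence'' of Theorem~\ref{smoothing:thm}; taking $K=\rM$ and converting the pointwise derivative bound into a global Lipschitz bound via integration along near-geodesic paths is the intended route. One small simplification: since the unit tangent bundle of the closed manifold $\rM$ is compact and $(x,v)\mapsto \pa\partial_v f_\varepsilon(x)\pa$ is continuous, the strict pointwise inequality $\pa\partial_v f_\varepsilon\pa < L(f)+\delta$ already forces $\sup_{x,v}\pa\partial_v f_\varepsilon\pa < L(f)+\delta$, so $L(f_\varepsilon)<L(f)+\delta$ follows without the $\delta/2$ adjustment.
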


Using Theorem~\ref{smoothing:thm} for the case $V=\Rb$, Greene and Wu actually showed \cite[Proposition 2.1]{GW79} that for
any connected (possibly noncompact) Riemannian manifold $\rM$ without boundary, any Lipschitz function $f:\rM\rightarrow \Rb$,
any continuous function $g:\rM\rightarrow \Rb_{>0}$, and any $\delta>0$,
there is some smooth function $h:\rM\rightarrow \Rb$ such that $|f-h|< g$ on $\rM$ and $L(h)<L(f)+\delta$.
It would be interesting to see whether this result extends to every Banach space $V$.

\section{Estimate of Lipschitz Constants for Projections} \label{estimate:sec}

The key in our approach is the following estimate of the Lipschitz seminorm of
the projection obtained from an ``almost-projection'' in the usual way.

\begin{proposition} \label{approx proj to projection:prop}
Let $A$ be  a unital $C^*$-algebra, and let $a\in A$ be self-adjoint whose
spectrum has empty intersection with the interval $(\delta, 1-\delta)$ in $\Rb$
for some $0<\delta<1/2$. Let $L$ be a (possibly $+\infty$-valued) seminorm
on $A$. Suppose that $L$ is lower semi-continuous, vanishes on $1_A$, and $L(b^{-1})\le \pa b^{-1}\pa^2 L(b)$
for every invertible $b\in A$.  Let $f$ denote the characteristic function of the interval
$[1-\delta, +\infty)$ on $\Rb$.
Then
$$L(f(a))\le L(a)/(1-2\delta),$$
where $f(a)$ denotes the continuous-functional calculus of $a$ under $f$.
\end{proposition}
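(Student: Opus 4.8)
The plan is to realize the spectral projection $f(a)$ as a norm-convergent integral of resolvents of a single self-adjoint element and then push $L$ through the integral, controlling each resolvent by the inversion hypothesis $L(b^{-1})\le\pa b^{-1}\pa^2 L(b)$. The reason this is forced on us is that, beyond the seminorm axioms and $L(1_A)=0$, the inversion inequality is the \emph{only} information we have about $L$; we know nothing about how $L$ interacts with products or powers of $a$, so resolvents are essentially the only elements built from $a$ whose $L$ we can estimate against $L(a)$. Concretely, I would pass to $c:=2a-1_A$, which is self-adjoint with $\spec(c)=2\spec(a)-1\subseteq(-\infty,-(1-2\delta)]\cup[1-2\delta,+\infty)$; hence $c$ is invertible, $\dist(0,\spec(c))\ge 1-2\delta$, and $c-is\,1_A$ is invertible for every $s\in\Rb$. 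On $\spec(a)$ one has $f(x)=\tfrac12\bigl(1+\mathrm{sgn}(2x-1)\bigr)$, so $f(a)=\tfrac12(1_A+v)$ with $v:=c(c^2)^{-1/2}$; since $L$ is a seminorm vanishing on scalar multiples of $1_A$, we get $L(c)=2L(a)$ and $L(f(a))\le\tfrac12 L(v)$. Thus it suffices to prove $L(v)\le L(c)/(1-2\delta)$.

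For the integral representation I would start from the elementary scalar identity $x/|x|=\tfrac1\pi\int_{-\infty}^{\infty}\tfrac{x}{x^2+s^2}\,ds$ (a symmetric improper integral), which holds uniformly for $x$ in any compact subset of $\Rb\setminus\{0\}$. Applying the continuous functional calculus to $c$ — whose spectrum is compact and bounded away from $0$ — and using $(c-is\,1_A)^{-1}=c(c^2+s^2 1_A)^{-1}+is(c^2+s^2 1_A)^{-1}$ where the second summand is odd in $s$ and so integrates to $0$ over $[-R,R]$, I obtain the norm-convergent limit
\[
v=\frac1\pi\,\lim_{R\to\infty}\int_{-R}^{R}(c-is\,1_A)^{-1}\,ds .
\]

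Now the estimate. For fixed $R$, the integral is a norm-limit of Riemann sums $\sum_j(c-is_j\,1_A)^{-1}\,\Delta s_j$, so by lower semicontinuity and subadditivity of $L$, then the inversion hypothesis applied to $c-is\,1_A$ together with $L(c-is\,1_A)=L(c)$,
\[
L\!\left(\int_{-R}^{R}(c-is\,1_A)^{-1}\,ds\right)\le\int_{-R}^{R}\pa(c-is\,1_A)^{-1}\pa^{2}\,L(c)\,ds\le L(c)\int_{-\infty}^{\infty}\frac{ds}{s^{2}+(1-2\delta)^{2}}=\frac{\pi\,L(c)}{1-2\delta},
\]
where $\pa(c-is\,1_A)^{-1}\pa=\dist(is,\spec(c))^{-1}\le\bigl(s^{2}+(1-2\delta)^{2}\bigr)^{-1/2}$ because $\spec(c)\subseteq\Rb$ with $\dist(0,\spec(c))\ge 1-2\delta$. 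Dividing by $\pi$ and letting $R\to\infty$, using lower semicontinuity once more, gives $L(v)\le L(c)/(1-2\delta)=2L(a)/(1-2\delta)$, hence $L(f(a))\le L(a)/(1-2\delta)$.

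The one place where something genuinely has to be chosen correctly is the representation: the integral must run along the imaginary axis, which stays at distance $\ge\dist(0,\spec(c))$ from $\spec(c)$ with the minimum attained at the origin, so that the total weight $\tfrac1\pi\int_{-\infty}^{\infty}\pa(c-is\,1_A)^{-1}\pa^{2}\,ds$ is exactly $1/(1-2\delta)$; a more naive contour hugging $\spec(c)$ (say a circle around $\spec(c)\cap(0,\infty)$) is forced to dip close to the spectrum and over-counts. Once the representation is fixed, the remaining step — interchanging $L$ with the integrals — is routine, resting only on lower semicontinuity and subadditivity of $L$ and on the dominating integrand $s\mapsto\pa(c-is\,1_A)^{-1}\pa^{2}L(c)$ being continuous and integrable on $\Rb$; I do not expect any obstacle there.
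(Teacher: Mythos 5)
Your proof is correct, and it reaches the same basic mechanism as the paper --- represent the spectral projection as an integral of resolvents along the vertical line through the middle of the spectral gap, then push $L$ through the integral using lower semicontinuity, subadditivity, and the hypothesis $L(b^{-1})\le \pa b^{-1}\pa^2 L(b)$, with the constant $1/(1-2\delta)$ coming from $\int_{\Rb}\bigl(s^2+(1-2\delta)^2\bigr)^{-1}ds=\pi/(1-2\delta)$ --- but the way you produce the resolvent integral is genuinely different. The paper stays with $a$ itself and uses the holomorphic functional calculus: $f(a)=\frac{1}{2\pi i}\int_{\gamma_s}g(z)(z-a)^{-1}dz$ over a large rectangle with left side on $\Rea\, z=1/2$, and then checks that the contributions of the other three sides are $O(s^{-1})$ and vanish as $s\to\infty$, so only the mid-gap line survives in the estimate. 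You instead pass to $c=2a-1_A$, write $f(a)=\tfrac12(1_A+\mathrm{sgn}(c))$, and obtain $\mathrm{sgn}(c)$ as the principal-value integral $\frac1\pi\lim_{R\to\infty}\int_{-R}^R(c-is\,1_A)^{-1}ds$ via the scalar identity $\mathrm{sgn}(x)=\frac1\pi\int_{\Rb}\frac{x}{x^2+s^2}ds$ (uniform on the compact spectrum) and the cancellation of the odd part of the resolvent. This avoids the Riesz/holomorphic functional calculus and the auxiliary rectangle bookkeeping entirely, at the modest cost of the algebraic preprocessing ($L(c)=2L(a)$, $L(f(a))\le\tfrac12 L(v)$, the odd/even splitting); the paper's closed-contour version is more standard and generalizes immediately to other spectral projections without a symmetric reformulation. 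One small point, which your argument shares with the paper's, is that you use $L(c-is\,1_A)=L(c)$, i.e.\ that $L$ vanishes on complex scalar multiples of $1_A$; this is how the hypotheses are intended (and holds for the Lipschitz seminorms in the application), so it is not a gap.
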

\begin{proof}
Take $R>0$ such that the spectrum of $a$ is contained in the union
of the intervals $(-\infty, \delta]$
and $[1-\delta, 1+R]$ in $\Rb$.

Denote by $g$ the characteristic function of $\{z\in \Cb: {\rm Re} z\ge (\delta+1/2)/2\}$ on
$\Cb$.
Note that $g$ is holomorphic on $\{z\in \Cb: {\rm  Re}z\neq (\delta+1/2)/2\}$.
For each $s>R$, denote by $\gamma_s$
the
rectangle with vertices
$1/2+si$, $1/2-si$,
$(1+s)-si$, and $(1+s)+si$, parameterized as a piecewise smooth simple closed curve
in the anti-clockwise direction.
Then
\begin{eqnarray*}
L(f(a))&=&L(g(a))=L(\frac{1}{2\pi i}\int_{\gamma_s}g(z)(z-a)^{-1}\, dz)\\
&\le & \frac{1}{2\pi}\int_{\gamma_s}L(g(z)(z-a)^{-1})\, d|z|\\
&=& \frac{1}{2\pi}\int_{\gamma_s}L((z-a)^{-1})\, d|z|\\
&\le & \frac{1}{2\pi}\int_{\gamma_s}\pa (z-a)^{-1}\pa^2 L(z-a)\, d|z|\\
&=& \frac{L(a)}{2\pi} \int_{\gamma_s}\pa (z-a)^{-1}\pa^2 \, d|z|,
\end{eqnarray*}
where the first inequality follows from the lower semi-continuity of $L$.
For $z$ in the line segment from $1/2-si$ ($(1+s)+si$ resp.) to $(1+s)-si$ ($1/2+si$ resp.),
one has $\pa (z-a)^{-1}\pa^2\le s^{-2}$ since $a$ is self-adjoint. For
$z=1/2+ti$ with $t\in \Rb$, one has $\pa (z-a)^{-1}\pa^2\le ((1/2-\delta)^2+t^2)^{-1}$.
  For
$z=(1+s)+ti$ with $t\in \Rb$, one has $\pa (z-a)^{-1}\pa^2\le ((s-R)^2+t^2)^{-1}$.
Therefore
\begin{eqnarray*}
L(f(a))&\le & \frac{L(a)}{2\pi} \int_{\gamma_s}\pa (z-a)^{-1}\pa^2 \, d|z|\\
&\le & \frac{L(a)}{2\pi}(2s^{-2}(1/2+s)+\int^{+\infty}_{-\infty}((1/2-\delta)^2+t^2)^{-1}\, dt\\
& &+\int^{+\infty}_{-\infty}((s-R)^2+t^2)^{-1}\, dt)\\
&=& \frac{L(a)}{2\pi}(s^{-2}(1+2s)+\frac{1}{1/2-\delta}\int^{+\infty}_{-\infty}(1+t^2)^{-1}\, dt\\
& &+\frac{1}{s-R}\int^{+\infty}_{-\infty}(1+t^2)^{-1}\, dt)\\
&=& \frac{L(a)}{2\pi}(s^{-2}(1+2s)+\frac{\pi}{1/2-\delta}+\frac{\pi}{s-R}).
\end{eqnarray*}
Letting $s\to \infty$, we obtain $L(f(a))\le L(a)/(1-2\delta)$ as desired.
\end{proof}

Using Proposition~\ref{approx proj to projection:prop}, Rieffel has  improved 
various estimates in a previous version of Sections 3, 4, 6 and 10 of \cite{Rieffel06}.

In general the estimate in Proposition~\ref{approx proj to projection:prop} is the best possible, as the following
example shows.

\begin{example} \label{estimate:example}
Let $X=\{x_1, x_2\}$ be a $2$-point metric space with the metric $\rho(x_1, x_2)=1$. Define $L$ on $C(X)$ via
(\ref{L:eq}).
By \cite[Proposition 2.2]{Rieffel06} the seminorm $L$ satisfies the conditions in Proposition~\ref{approx proj to projection:prop}.
For $0<\delta<1/2$ define $a\in C(X)$ by
$a(x_1)=\delta$ and $a(x_2)=1-\delta$. Then $(f(a))(x_1)=0$ and $(f(a))(x_2)=1$, where $f$ is as in  Proposition~\ref{approx proj to projection:prop}.
Thus
$L(a)=1-2\delta$ and $L(f(a))=1=L(a)/(1-2\delta)$.
\end{example}

We are ready to prove Theorem~\ref{approximation:thm}.

\begin{proof}[Proof of Theorem~\ref{approximation:thm}]
Let $0<\delta<1/2$. By Corollary~\ref{compact smoothing:cor} applied to
$V$ equal to the self-adjoint part of $A$ and $f$ equal to $p$, we can find some
self-adjoint $p_1\in C^{\infty}(\rM, A)$ with $\pa p-p_1\pa_{\infty}<\delta$ and $L(p_1)<L(p)+\varepsilon/2$.
Since $p$ is a projection, by \cite[Lemma 3.2]{Rieffel06} the spectrum of $p_1$ is contained in
the union of the intervals $[-\delta, \delta]$ and $[1-\delta, 1+\delta]$ in $\Rb$.
Let $f$ denote the characteristic function of the interval
$[1-\delta, +\infty)$ on $\Rb$.
Set $q=f(p_1)$.

Say, $A$ is a real $C^*$-subring contained in
a $C^*$-algebra $B$.
Since $C^{\infty}(\rM, B)$ is closed under the holomorphic functional
calculus, we have $q\in C^{\infty}(\rM, B)$.
Using polynomial approximations it is easy to see that the self-adjoint part of
$C(\rM, A)$ is closed under continuous functional calculus for
real-valued continuous functions on $\Rb$ vanishing at $0$ (see for example the proof of \cite[Proposition 2.4]{Rieffel06}).
Therefore $q\in C^{\infty}(\rM, B)\cap C(\rM, A)=C^{\infty}(\rM, A)$.

One has
$$ \pa q-p\pa_{\infty}\le \pa q-p_1\pa_{\infty}+\pa p_1-p\pa_{\infty}<\delta+\delta=2\delta.$$
It is readily checked that the Lipschitz seminorm $L$
on the $C^*$-algebra $C(\rM, B)$ defined by (\ref{L:eq})
satisfies the conditions in Proposition~\ref{approx proj to projection:prop}
(see for example the proof of \cite[Proposition 2.2]{Rieffel06}).
By Proposition~\ref{approx proj to projection:prop}
one has
$$ L(q)\le L(p_1)/(1-2\delta)<(L(p)+\varepsilon/2)/(1-2\delta).$$
Thus, when $\delta>0$ is small enough, we have $\pa q-p\pa_{\infty}<\varepsilon$ and $L(q)<L(p)+\varepsilon$ as desired.
\end{proof}


\end{document}